\begin{document}

\markboth{N. Malekzadeh, E. Abedi }{Ricci-generalized pseudosymmetric $(\kappa, \mu)$-manifolds}
\title{Ricci-generalized pseudosymmetric  $(\kappa, \mu)$-contact metric manifolds
\thanks{This work was supported by the ...}}



\author[AUTHOR1 and AUTHOR2 and AUTHOR3]{N. Malekzadeh\affil{1}\comma\corrauth and E. Abedi\affil{2} }
\address{\affilnum{1}, \affilnum{2}\ Department of Mathematics, Azarbaijan Shahid Madani University, Tabriz 53751 71379, I. R. Iran\\ 
   } 
\emails{{\tt n.malekzadeh@azaruniv.edu} (N. Malekzadeh), {\tt esabedi@azaruniv.ac.ir} (E. Abedi), {\tt }  } 
%
%
%

\begin{abstract}
In this paper we classify Ricci-generalized pseudosymmetric  $(\kappa, \mu)$-contact metric manifolds in the sense of Deszcz .  
\end{abstract}

\keywords{pseudosymmetric manifolds, Ricci-generalized pseudosymmetric manifolds, $(\kappa, \mu)$-contact metric manifolds }

\ams{53D10, 53C35}

\maketitle

\section{Introduction}  
Semisymmetric manifolds, as a direct generalization of locally symmetric manifolds, was first studied by Cartan \cite{42}. The classification of semisymmetric manifolds was described by Szab$\acute{o}$ \cite{13,27}. Chaki \cite{1} and Deszcz \cite{6} introduced two different concept of a pseudosymmetric manifold. In both sense various properties of pseudosymmetric manifolds have been studied \cite{1,2,23,30,44,45,3,4,5}. Recently Shaikh and Kundu proved  the equivalency of Deszcz and Chaki pseudosymmetry \cite{47}.

A Riemannian manifold $(M, g)$ is called semisymmetric if for all $X, Y\in \mathfrak{X}(M)$
\begin{equation*}
R(X,Y)~.~R=0,
\end{equation*} 
where $\mathfrak{X}(M)$ is Lie algebra of vector fields on $M$ \cite{13}. Deszcz \cite{6} generalized the concept of semisymmetry and introduced pseudosymmetric manifolds.  for a symmetric $(0, 2)$-tensor field $B$ on $M$ and $X, Y\in \mathfrak{X}(M)$, we define the endomorphism $X\wedge_B Y$ of $\mathfrak{X}(M)$ by
\begin{equation}\label{EQ1} 
(X \wedge_B Y ) Z = B( Y, Z) X - B( X, Z) Y~~~~~Z\in \mathfrak{X}(M) . 
\end{equation}

For a $(0, k)$-tensor field $T$, $k\geq 1$ and a $(0, 4)$-tensor $\mathcal{R}$, the $(0, k + 2)$ tensor fields $\mathcal{R} .T$ and $Q(B, T)$ are defined by \cite{7,6}
\begin{equation}\label{EQ2} 
\begin{array}{ll} 
(\mathcal{R}.T)(X_1,...,X_k; X, Y) & =(R(X,Y).T)(X_1,...,X_k) \\
 & =-T(R(X,Y)X_1, X_2, ...,X_k)\\
 &-...-T(X_1, ...,X_{k-1},R(X,Y)X_k),
\end{array}
\end{equation}     
and  
\begin{equation}\label{EQ3}
 \begin{array}{ll}
Q(B,T)(X_1, ...,X_k; X, Y)&=((X\wedge_B Y).T)(X_1, ..., X_k)\\
&=-T((X \wedge_B Y)X_1, X_2, ..., X_k)\\ 
&-... -T(X_1,...,X_{k-1},(X \wedge_B Y)X_k),
\end{array}
\end{equation}  
where $R(X,Y)=[\nabla_X, \nabla_Y]-\nabla_{[X, Y]}$ is the corresponding $(1, 3)$-tensor of $\mathcal{R}$.

A Riemannian manifold $M$ is said to be pseudosymmetric if the tensors $R.R$ and $Q(g, R)$ are linearly dependent at every point of $M$, i.e.,
\begin{equation}\label{EQ4}
R.R = L_R Q(g, R). 
\end{equation}   
This is equivalent to
\begin{equation}\label{EQ5}
(R(X, Y ).R)(U, V, W) = L_R [((X \wedge_g Y ).R)(U, V, W)]
\end{equation}
holding on the set $U_R = \lbrace x\in M: Q(g, R)\neq 0 ~at~ x \rbrace $, where $L_R$ is a smooth function on $U_R$ \cite{6}.
The manifold $M$ is called pseudosymmetric of constant type if $L$ is constant. Particularly if $L_R=0$ then $M$ is a semisymmetric manifold. 

Let $S$ denotes the Ricci tensor of $M^{2n+1}$. The manifold $M$ is called Ricci-generalized pseudosymmetric \cite{44,45} if the tensors $R . R$ and $Q(S,R)$ are linearly dependent. This is equivalent to
\begin{equation}\label{EQ7}  
R .R =L Q(S, R),
\end{equation}  
holding on the set $U = \lbrace x\in M: Q(S,R)\neq 0~ at~ x\rbrace $, where $L$ is a smooth function on $U$ and \cite{6}: 
\begin{equation}\label{EQ8}
 \begin{array}{ll}
Q(S,R)(X_1, X_2, X_3; X, Y )=(X\wedge_ SY)R(X_1, X_2) X_3-R((X \wedge_S Y)X_1, X_2) X_3\\
-R(X_1, (X \wedge_S Y) X_2) X_3-R(X_1, X_2)(X \wedge_S Y) X_3.
\end{array}
\end{equation}
3-dimensional pseudosymmetric spaces of constant type have been studied by Kowalski and Sekizawa \cite{20,15,16,21}. Conformally flat pseudosymmetric spaces of constant type were classified by  Hashimoto and Sekizawa for dimension three \cite{14} and by Calvaruso for dimensions $>2$ \cite{22}. In dimension three, Cho and Inoguchi studied pseudosymmetric contact homogeneous manifolds \cite{2}. Cho et al. treated the conditions that 3-dimensional trans-Sasakians, non-Sasakian generalized $(\kappa, \mu)$-spaces and quasi-Sasakians manifolds be pseudosymmetric \cite{7}. Belkhelfa et al.  obtained some results on pseudosymmetric Sasakian space forms  \cite{7}. Finally some classes of pseudosymmetric contact metric 3-manifolds  
have been studied by Gouli-Andreou and Moutafi \cite{25,26}.

Papantoniou classified semisymmetric $(\kappa, \mu)$-contact metric manifolds \cite{12}. Pseudosymmetric $N(\kappa)$-contact metric manifold were studied by De and Jun \cite{30}. Thay also classified Ricci-generalized pseudosymmetric $N(\kappa)$-contact metric manifolds \cite{30}. As a generalization, in this paper, we study Ricci-generalized pseudosymmetric $(\kappa, \mu)$-contact metric manifolds.

This paper is organized as follows. After some preliminaries on $(\kappa, \mu)$-contact metric manifolds, in section 3 we characterize Ricci-generalized pseudosymmetric $(\kappa, \mu)$-contact metric manifolds and give an example.

\section{Preliminaries} 
A contact manifold is an odd-dimensional $C^{\infty}$ manifold  $M^{2n+1}$ equipped with a global 1-form $\eta$ such that $\eta\wedge (d\eta)^{n}\neq0$ everywhere. Since $d\eta$ is of rank $2n$, there exists a unique vector field $\xi$ on $M^{2n+1}$ satisfying $\eta(\xi)=1$ and $d\eta(\xi,X)=0$ for any $X\in \mathfrak{X}(M)$ is called the Reeb vector field or characteristic vector field of $\eta $. A Riemannian metric $g$ is said to be an associated metric if there exists a (1,1) tensor field $\varphi$ such that 
\begin{equation*}
d\eta(X,Y)=g(X,\varphi Y), \hspace{8mm} \eta(X)=g(X,\xi),  \hspace{8mm} \varphi ^{2}=-I+\eta \otimes\xi.
\end{equation*} 
The structure $(\varphi,\xi,\eta,g)$ is called a contact metric structure and a manifold $M^{2n+1}$ with a contact metric structure is said to be a contact metric manifold. Given a contact metric structure $(\varphi,\xi,\eta,g)$, we define a (1,1) tensor field $h$ by $h=(1/2) \mathcal{L}_{\xi}\varphi$  where $\mathcal{L}$ denotes the operator of Lie differentiation. On a contact metric manifold
$h$ is symmetric operator and 
\begin{equation}\label{EQ0165}
\nabla_X \xi=-\varphi X-\varphi hX.
\end{equation}
A contact metric manifold is said to be a Sasakian manifold if \begin{equation*}
(\nabla_X \varphi)Y=g(X, Y )\xi- \eta(Y )X.  
\end{equation*} 
In which case we have
 \begin{equation}\label{EQ10}
R(X, Y )\xi = \eta(Y )X - \eta(X)Y.
\end{equation}
The sectional curvature $K(\xi, X)$ of a plane section spanned by $\xi $ and a vector $X$ orthogonal to $\xi $ is called a $\xi$-sectional curvature, while the sectional curvature $K(X, \varphi X)$ is called a $\varphi$-sectional curvature.

The $(\kappa, \mu)$-nullity distribution of a contact metric manifold $M (\varphi,\xi,\eta,g)$ is a distribution \cite{11}
\begin{equation*}
\begin{array}{ll}
N(\kappa, \mu) : p \longrightarrow N_p(\kappa, \mu) =
\lbrace W \in T_pM | R(X, Y )W &= \kappa [g(Y,W)X - g(X,W)Y ]\\
&+\mu [g(Y,W)hX - g(X,W)hY ] \rbrace,
\end{array}
\end{equation*}
where $\kappa, \mu $ are real constants. Hence if the characteristic vector field $\xi $ belongs to the
$(\kappa, \mu)$-nullity distribution, then we have
\begin{equation}\label{EQ11}
R(X, Y )\xi = \kappa \lbrace \eta(Y )X- \eta(X)Y \rbrace +\mu \lbrace \eta(Y )hX- \eta(X)hY \rbrace.
\end{equation}
A contact metric manifold satisfying \eqref{EQ11} is called a $(\kappa, \mu)$-contact metric manifold. If $M$ be a $(\kappa,\mu)$-contact metric manifold, then the following relations hold \cite{11}:
\begin{equation}\label{EQ12}
S(X, \xi) = 2nk\eta(X),
\end{equation}
\begin{equation}
Q\xi= 2nk\xi,
\end{equation}
\begin{equation}\label{EQ13}
h^2 = (k -1)\varphi^{2}, 
\end{equation}
\begin{equation}\label{EQ14}
R(\xi, X)Y =\kappa \lbrace g(X, Y )\xi- \eta(Y )X \rbrace +\mu \lbrace g(hX, Y )\xi- \eta(Y )hX\rbrace ,
\end{equation}
\begin{equation}\label{EQ15}
\begin{array}{ll}
S(X, Y) &= [2(n-1)-n\mu]g(X, Y)+ [2(n-1)+\mu] g(hX,Y)\\
&+[2(1-n)+n(2\kappa+\mu )]\eta(X)\eta(Y).
\end{array}
\end{equation}
Using the equation \eqref{EQ13} and \eqref{EQ15} one can easily get 
\begin{equation}\label{EQ151}
\begin{array}{ll}
S(hX, Y) &= [2(n-1)-n\mu]g(hX, Y)-(\kappa -1) [2(n-1)+\mu] g(X,Y)\\
&+(\kappa -1)[2(n-1)+\mu ]\eta(X)\eta(Y). 
\end{array}
\end{equation} 
We note that if $M^{2n+1}$ be a $(\kappa, \mu)$-contact metric manifold, then $\kappa \leq 1$ \cite{11}. When $\kappa< 1$, the nonzero eigenvalues of $h$ are $\pm\sqrt{1-\kappa}$ each with multiplicity $n$. Let $\lambda$ be the positive eigenvalue of $h$. Then $M^{2n+1}$ admits three mutually orthogonal and integrable distributions $D(0), D(\lambda) $ and $D(-\lambda)$ defined by the eigenspaces of $h$ \cite{18}. From $\lambda=\sqrt{1-\kappa}$ we have
\begin{equation}\label{EQ141}
\kappa=1-\lambda^2.
\end{equation} 
We easily check that Sasakian manifolds are contact $(\kappa, \mu)$-manifolds with $\kappa= 1$ and $h = 0$ \cite{11}. In particular, if  $\mu=0$, then we obtain the condition of $k$-nullity distribution introduced by Tanno\cite{18}. If $\xi \in N(\kappa)$, then we call a contact metric manifold an $N(\kappa)$-contact metric manifold. It is shown that
\begin{theorem}\label{4.0}\cite{30}
Let $M$ be a non-flat $(2n + 1)$-dimensional $N(\kappa)$-contact metric manifold. Then $M$ satisfies the condition 
$R(\xi, X) . R = L(\xi \wedge_S X) . R$ if and only if it is a Sasakian manifold of constant curvature $+1$ or locally isometric to
$E^{n+1} \times S^n(4)$ for $n > 1$.
\end{theorem}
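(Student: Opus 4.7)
The plan is to reduce the tensorial hypothesis $R(\xi, X)\cdot R = L(\xi \wedge_S X)\cdot R$ to an algebraic constraint forcing $\kappa \in \{0,1\}$, after which the known classification of $N(\kappa)$-contact metric spaces yields the two geometric conclusions. Throughout we specialize every formula from Section~2 to $\mu = 0$: equation \eqref{EQ14} becomes $R(\xi, X)Y = \kappa\{g(X,Y)\xi - \eta(Y)X\}$, the Ricci tensor \eqref{EQ15} loses its $h$-piece coming from $\mu$, and by \eqref{EQ12} the operator reduces to $(\xi \wedge_S X)Y = S(X,Y)\xi - 2n\kappa\,\eta(Y)X$.

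First I would expand both $R(\xi, X)\cdot R$ and $(\xi \wedge_S X)\cdot R$ via \eqref{EQ2} and \eqref{EQ3}, each producing four terms in which the derivation acts on a slot of $R$. Substituting the closed formulas above rewrites the left-hand side as a $\kappa$-weighted combination of $g$- and $\eta$-contractions of $R$, and the right-hand side as an analogous $S$-weighted combination. Setting $W = \xi$ in the resulting identity collapses several terms via \eqref{EQ11} and \eqref{EQ12}; then feeding $U, V$ as eigenvectors of $h$ with eigenvalues $\pm\sqrt{1-\kappa}$, and using \eqref{EQ15} (with $\mu = 0$) to expand $S$ in terms of $g$, $h$, and $\eta$, decouples the tensorial identity into scalar equations in $\kappa$ and $L$. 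I expect these to combine into a polynomial relation of the form $\kappa(1-\kappa) = 0$ after matching coefficients of the independent $g$-, $h$-, and $\eta$-components, yielding the dichotomy $\kappa = 1$ or $\kappa = 0$.

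If $\kappa = 1$, then \eqref{EQ13} gives $h^2 = 0$, and since $h$ is symmetric this forces $h = 0$, so $M$ is Sasakian; a further inspection of the original identity in combination with \eqref{EQ10} should pin the sectional curvature down to the constant value $+1$. If $\kappa = 0$, the Blair--Koufogiorgos--Papantoniou classification identifies $M$ locally with $E^{n+1} \times S^n(4)$, which accounts for the $n > 1$ restriction. The converse is routine: for the unit sphere $R(X,Y)Z = g(Y,Z)X - g(X,Z)Y$ and $S = 2n\,g$, so $R(\xi, X) = \xi \wedge_g X$ while $\xi \wedge_S X = 2n(\xi \wedge_g X)$, and the identity holds with $L = 1/(2n)$; on $E^{n+1} \times S^n(4)$ the vanishing of $\kappa$ makes the left-hand side zero via \eqref{EQ14}, and $L = 0$ closes the verification. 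The principal obstacle is bookkeeping in the middle step, namely isolating substitutions that simultaneously annihilate the $\eta$-contractions and leave a clean polynomial in $\kappa$, so that both endpoint values $\kappa = 0$ and $\kappa = 1$ emerge rather than being masked by cancellations among the four expansion terms.
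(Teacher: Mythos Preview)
The paper does not prove this theorem at all: Theorem~\ref{4.0} is imported verbatim from reference~\cite{30} (De and Jun) and is stated without proof in Section~2, so there is no argument in the paper to compare your proposal against. What the paper \emph{does} prove are the $(\kappa,\mu)$-generalizations (Propositions~\ref{5.2}, \ref{5.1}, \ref{5.11} and Theorem~\ref{5.6}), and your outline is broadly consonant with the techniques used there: expand $R(\xi,X)\cdot R$ and $(\xi\wedge_S X)\cdot R$ via \eqref{EQ2}--\eqref{EQ3}, contract with $\xi$, specialize arguments to $\xi$ and to $h$-eigenvectors, and read off scalar constraints from \eqref{EQ15} and \eqref{EQ151}.

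That said, your write-up is a plan rather than a proof, and two of its pivots are left as expectations. First, the assertion that the scalar equations ``combine into $\kappa(1-\kappa)=0$'' is not demonstrated; in the paper's own $(\kappa,\mu)$ computation (Proposition~\ref{5.2}) the analogous reduction requires carefully subtracting three separate identities \eqref{EQ61}--\eqref{EQ63}, and the $\mu=0$ specialization still needs that bookkeeping carried out explicitly. Second, the step ``a further inspection\ldots should pin the sectional curvature down to $+1$'' in the Sasakian branch is exactly the content of Proposition~\ref{5.11}, which requires evaluating \eqref{EQ21} on an orthonormal pair orthogonal to $\xi$ and is not automatic. Finally, in the converse direction your claim that $L=0$ handles $E^{n+1}\times S^n(4)$ is correct but deserves one more sentence: you should note that $R(\xi,X)=0$ there (since $\kappa=0$), so the left-hand side vanishes identically regardless of whether $(\xi\wedge_S X)\cdot R$ does.
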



\section{Ricci-generalized pseudosymmetric  $(\kappa,\mu)$-manifolds } 
Blair et al. studied the condition of $(\kappa, \mu)$-nullity distribution on a contact manifold and obtained the following Theorem.
\begin{theorem}\label{5.0} \cite{11}
Let $M^{2n+1}(\varphi,\xi ,\eta, g)$ be a contact manifold with $\xi$ belonging to the $(\kappa, \mu)$-nullity distribution. If $\kappa < 1$, then for any $X$ orthogonal to $\xi$ the following formulas hold:
\begin{itemize}
\item[1.]	The $\xi$-sectional curvature $ K(X,\xi)$ is given by
\begin{equation*}
K(X,\xi)=\kappa +\mu g(hX,X)=\left\lbrace \begin{array}{rl}
&\kappa+ \lambda \mu\quad  \text{if } X\in D(\lambda)\\
&\kappa- \lambda \mu \quad  \text{if } X\in D(-\lambda),
\end{array} \right. 
\end{equation*}
\item[2.] The sectional curvature of a plan section $\lbrace X, Y\rbrace$ normal to $\xi$ is given by
\begin{equation}\label{EQ18}
K(X,Y)=\left\lbrace \begin{array}{rl}
i) & 2(1+ \lambda)- \mu\quad  \text{if } X, Y\in D(\lambda)\\   
ii) & -(\kappa+ \mu)[g(X, \varphi Y)]^2 \quad \text{for any unit vectors } X\in D(\lambda), Y\in D(-\lambda)\\
iii) & 2(1- \lambda)- \mu\quad  \text{if } X, Y\in D(-\lambda),~n> 1.
\end{array} \right.
\end{equation}
\end{itemize}
\end{theorem}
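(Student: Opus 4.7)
The plan is to prove the two parts separately; part 1 is a direct one-line computation from the $(\kappa,\mu)$-condition, while part 2 requires the full curvature tensor restricted to vectors orthogonal to $\xi$ and is the real work.

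For part 1, I would substitute $Y=\xi$ in the defining equation \eqref{EQ11}. Since $X\perp\xi$ gives $\eta(X)=0$, and since $\eta(\xi)=1$ and $h\xi=0$, \eqref{EQ11} collapses to $R(X,\xi)\xi=\kappa X+\mu hX$. Taking inner product with a unit $X$ yields $K(X,\xi)=g(R(X,\xi)\xi,X)=\kappa+\mu\,g(hX,X)$. The two subcases follow because $hX=\pm\lambda X$ on $D(\pm\lambda)$, giving $g(hX,X)=\pm\lambda$.

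For part 2, the given identities \eqref{EQ11} and \eqref{EQ14} describe $R$ only when one slot is $\xi$, so I need an explicit formula for $R(X,Y)Z$ with all three vectors orthogonal to $\xi$. My plan is to derive this in the style of Blair–Koufogiorgos–Papantoniou: apply the Ricci identity $R(X,Y)\xi=\nabla_X\nabla_Y\xi-\nabla_Y\nabla_X\xi-\nabla_{[X,Y]}\xi$, substitute $\nabla_X\xi=-\varphi X-\varphi hX$ from \eqref{EQ0165}, and match with \eqref{EQ11}. Comparing the resulting expressions produces the explicit formula for $(\nabla_X h)Y$ on a $(\kappa,\mu)$-manifold and the anticommutation $\varphi h+h\varphi=0$; the latter immediately implies $\varphi(D(\pm\lambda))=D(\mp\lambda)$ and the integrability of $D(\pm\lambda)$ with totally geodesic leaves (this last point requires $n>1$, matching the hypothesis in case (iii)).

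With these structural results in hand, the three sectional curvatures are extracted by choosing $X,Y$ from the appropriate eigendistribution and computing $g(R(X,Y)Y,X)$. For case (i), $X,Y\in D(\lambda)$ lie tangent to a totally geodesic leaf, and the Gauss equation combined with the derived curvature formula yields $2(1+\lambda)-\mu$; case (iii) is identical with $\lambda\mapsto-\lambda$. For case (ii), unit $X\in D(\lambda)$, $Y\in D(-\lambda)$, the terms proportional to $g(X,Y)$ and $g(hX,Y)$ vanish by orthogonality of the eigenspaces, while the $\varphi$-dependent terms in the curvature formula survive and collect as $-(\kappa+\mu)[g(X,\varphi Y)]^{2}$.

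The main obstacle is the first half of this second argument: extracting the explicit form of $\nabla h$ and the curvature tensor on the horizontal subbundle $\ker\eta$. Everything after that — the eigenvalue decomposition, the cancellations in each of (i), (ii), (iii), and the appearance of the factor $[g(X,\varphi Y)]^{2}$ — is bookkeeping guided by the anticommutation $\varphi h=-h\varphi$ and the orthogonality of $D(0)$, $D(\lambda)$, $D(-\lambda)$.
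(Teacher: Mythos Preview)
The paper does not prove this theorem at all: it is quoted verbatim from Blair--Koufogiorgos--Papantoniou \cite{11} and stated without proof, so there is no ``paper's own proof'' to compare against. Your outline is essentially the argument given in the original source \cite{11}, and it is correct in structure; the only caveat is that the full curvature formula on $\ker\eta$ in \cite{11} is obtained by a somewhat longer computation than you suggest (it uses the explicit form of $(\nabla_X\varphi)Y$ on a $(\kappa,\mu)$-manifold in addition to $\nabla h$), but your identification of the key inputs---$\varphi h=-h\varphi$, the eigenspace decomposition, and the horizontal curvature expression---is accurate.
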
 
Let $(\kappa,\mu)$-manifold $M$ satisfies the condition $R .R =L Q(S, R)$. We note that if $L=0$, then $M$ is semisymmetric and then $M$ is either a Sasakian manifold of constant sectional curvature 1, or
locally isometric to the product of a flat $(n+1)$-dimensional Euclidean manifold and an $n$-dimentional manifold of constant curvature 4 \cite{12}. 
 
We suppose $L\neq 0$ and we give some propositions that we need in the sequel.
\begin{proposition}\label{5.2}
A $(\kappa, \mu)$-contact metric manifold $M^{2n+1}$ is Ricci-generalized pseudosymmetric manifold if and only if it satisfies
\begin{equation}\label{EQ100} 
(1-2n)\kappa \mu -n\mu^2 +2(n-1)(\kappa+\mu)=0.
\end{equation}  
\end{proposition}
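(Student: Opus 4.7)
The plan is to expand both sides of the defining identity $R\cdot R = L\,Q(S,R)$ using the structural identities \eqref{EQ11}--\eqref{EQ151}, specialize the tensor arguments so that enough terms vanish to be manageable, and reduce the tensorial equation to a scalar polynomial identity in $\kappa$ and $\mu$.

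First I would substitute $Y=\xi$ into \eqref{EQ2} and \eqref{EQ8}. Formula \eqref{EQ14} gives $R(X,\xi)Z$ in closed form, while \eqref{EQ12} gives $(X\wedge_S\xi)Z = 2n\kappa\,\eta(Z)X - S(X,Z)\xi$, and \eqref{EQ11} evaluates $R(U,V)\xi$ and $R(X,\xi)Z$ when $Z=\xi$. This collapses the four-term expansions of both $(R(X,\xi)\cdot R)(U,V)W$ and $Q(S,R)(U,V,W;X,\xi)$ into a sum of terms each of which is an explicit combination of $g$, $\eta$, $h$, and $\varphi$ with coefficients polynomial in $\kappa$ and $\mu$.

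Next I would specialize $X$, $U$, $V$, $W$ to unit vectors orthogonal to $\xi$ drawn from the integrable eigenspaces $D(\lambda)$ and $D(-\lambda)$ of $h$ (using \eqref{EQ141} so that $\lambda^2 = 1-\kappa$), replacing $hX$ by $\pm\lambda X$ etc. Using Theorem~\ref{5.0} to supply the sectional curvatures that appear when the curvature operator acts on such vectors, and then pairing the resulting vector identity with $\xi$ (or, equivalently, extracting the $\xi$-component in a $\varphi$-adapted orthonormal frame), I obtain a scalar relation of the form $A(\kappa,\mu,\lambda) = L\,B(\kappa,\mu,\lambda)$. Choosing a second combination (e.g.\ $X\in D(\lambda)$, $U\in D(-\lambda)$ rather than both in $D(\lambda)$) gives a second such relation; eliminating $L$ between the two and substituting $\lambda^2=1-\kappa$ should reduce the system to the single polynomial identity \eqref{EQ100}. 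This handles the ``only if'' direction. For the converse, once \eqref{EQ100} holds, one reverses the calculation: the coefficients of the structurally independent $g$-, $\eta$-, and $h$-terms in $R\cdot R$ and in $Q(S,R)$ match up to a common scalar $L$ at each point of $U$, establishing linear dependence.

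The main obstacle I expect is bookkeeping rather than conceptual: each side of \eqref{EQ7} is a sum of four endomorphism-composed terms, and after the substitutions each of these produces several summands involving $h$, $\varphi$, $\eta$, and $S$. I will need to systematically use \eqref{EQ13} to simplify compositions like $h^2$ and \eqref{EQ151} to handle $S(hX,\cdot)$, and I must choose the auxiliary vectors carefully so that neither side collapses to zero identically (which would produce a vacuous condition instead of \eqref{EQ100}). Verifying that the polynomial obtained does not split into spurious factors reflecting the semisymmetric or Sasakian special cases, and genuinely isolates \eqref{EQ100}, will be the most delicate checkpoint.
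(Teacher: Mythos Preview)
Your strategy diverges from the paper's in a way that creates a real difficulty. The paper does not use the eigenspace decomposition or Theorem~\ref{5.0} here at all. Instead it takes $(R(\xi,X)\cdot R)(Y,Z)W = L\,((\xi\wedge_S X)\cdot R)(Y,Z)W$, pairs with $\xi$, and then specializes \emph{again} along $\xi$ by setting $Y=\xi$ and $W=\xi$. With these choices the entire left-hand side collapses to zero via \eqref{EQ11} and \eqref{EQ14}, so what survives is an $L$-free identity coming solely from the right-hand side:
\[
-\kappa\,S(Z,X)-\mu\,S(hZ,X)+2n\kappa^{2}g(X,Z)+2n\kappa\mu\,g(hX,Z)=0.
\]
Setting $Z=X$ and reading off the coefficients of $g(X,X)$, $g(hX,X)$, and $\eta(X)^2$ via \eqref{EQ15}--\eqref{EQ151} gives three scalar equations whose combination is exactly \eqref{EQ100}. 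No elimination of $L$ is ever needed.

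Your plan, by contrast, keeps $U,V,W$ orthogonal to $\xi$ and lands on relations of the form \eqref{EQ19}. When you then feed in eigenvectors, the resulting scalar identity \emph{factors}: for $X,Y\in D(\lambda)$ it becomes
\[
\bigl[(\kappa+\lambda\mu)-L\,S(X,X)\bigr]\cdot\bigl[K(X,Y)-(\kappa+\lambda\mu)\bigr]=0,
\]
which is precisely \eqref{EQ25}. This is a disjunction, not an equation of the shape $A=L\,B$ with both sides generically nonzero, so your step ``eliminate $L$ between the two'' does not go through as written; you would instead be forced into the case analysis that the paper reserves for Theorem~\ref{5.6}, and that analysis actually \emph{uses} \eqref{EQ100} as an independent input rather than producing it. The fix is simple: abandon the eigenspace specialization for this proposition and instead make the second $\xi$-substitution ($W=\xi$) so that the left side vanishes outright.
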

\begin{proof}
Since the manifold $M$ is Ricci-generalized pseudosymmetric then we have
\begin{equation*}
(R(\xi, X).~R)(Y, Z, W)=L((\xi \wedge_S X).~R)(Y, Z, W).
\end{equation*}
Using \eqref{EQ1}, \eqref{EQ2}, \eqref{EQ8} and taking the inner product with $\xi$, we obtain
\begin{equation}\label{EQ57} 
\begin{array}{ll}
g(R(\xi, X)R(Y,Z)W, \xi)- g(R(R(\xi, X)Y, Z)W, \xi) -g(R(Y, R(\xi, X)Z)W, \xi)\\
-g(R(Y, Z)R(\xi, X)W, \xi) =L\lbrace S(R(Y,Z)W, X)-S(\xi, R(Y,Z)W)\eta(X)\\
-S(X, Y)\eta(R(\xi, Z)W) + S(\xi, Y)\eta(R(X, Z)W) -S(X,Z)\eta(R(Y, \xi)W)\\
+S(\xi, Z)\eta(R(Y, X)W) +S(\xi, W) \eta(R(Y, Z)X)\rbrace.  
\end{array}
\end{equation}
By virtu of \eqref{EQ11}, \eqref{EQ12} and \eqref{EQ14} in the equation \eqref{EQ57}, it follows that
\begin{equation}\label{EQ58} 
\begin{array}{ll}
\kappa g(X, R(Y,Z)W)+\kappa^2(g(X, Z)g(Y, W)-g(X, Y)g(Z, W))+\mu g(hX, R(Y,Z)W)\\
+\kappa \mu\lbrace -g(X, Y)g(hZ, W)- g(hX, Y)g(Z, W)+g(X, Z)g(hY, W)+g(hX, Z)g(Y, W)\\
+\eta(W)\eta(Y)g(hX, Z)-\eta(W)\eta(Z)g(hX, Y) \rbrace +\mu^2\lbrace -g(hX, Y)g(hZ, W)\\
+ g(hX, Z)g(hY, W)+\eta(W)\eta(Y)g(h^2X, Z)-\eta(W)\eta(Z)g(h^2X, Y) \rbrace =\\
L\lbrace S(R(Y,Z)W, X)+ \kappa(-S(X, Y)g(Z, W)+S(X, Y)\eta(W)\eta(Z)+S(X,Z)g(Y, W)\\
-S(X,Z)\eta(W)\eta(Y))+\mu (-S(X, Y)g(hZ, W)+S(X,Z)g(hY, W))\\
+2n\kappa^2(g(X, Z)\eta(W)\eta(Y)-g(X, Y)\eta(W)\eta(Z))+2n\kappa \mu(g(hX, Z)\eta(W)\eta(Y)\\
-g(hX, Y)\eta(W)\eta(Z))\rbrace.  
\end{array} 
\end{equation} 
Taking $Y=\xi$ in equation \eqref{EQ58} and using \eqref{EQ12} and \eqref{EQ14}, one can get
\begin{equation*}
L\eta(W)\lbrace -\kappa S(Z, X)-\mu S(hZ, X)+ 2n\kappa^2 g(X, Z) +2n\kappa \mu g(hX, Z)\rbrace=0.
\end{equation*}
Let $W=\xi$, then since $L\neq 0$ we have
\begin{equation}\label{EQ59}
-\kappa S(Z, X)-\mu S(hZ, X)+ 2n\kappa^2 g(X, Z) +2n\kappa \mu g(hX, Z)=0.
\end{equation}
Putting $Z=X$ and using \eqref{EQ15} and \eqref{EQ151}, equation \eqref{EQ59}  reduces to
\begin{equation}\label{EQ60} 
\begin{array}{ll} 
\kappa [2(n-1)-n\mu]g(X, X)+ \kappa [2(n-1)+\mu]g(hX, X)+\kappa[2(1-n)\\
+n(2\kappa+\mu)]\eta(X)\eta(X)=[\mu(\kappa -1)[2(n-1)+\mu]+2n\kappa^2]g(X, X)\\
+ [-\mu[2(n-1)-n\mu]+2n\kappa \mu]g(hX, X)-\mu(\kappa -1)[2(n-1)+\mu]\eta(X)\eta(X).
\end{array}
\end{equation} 
Then we have 
\begin{equation}\label{EQ61}
\kappa[2(1-n)+n(2\kappa+\mu)]=-\mu(\kappa -1)[2(n-1)+\mu],
\end{equation} 
\begin{equation}\label{EQ62}
\kappa [2(n-1)+\mu]=-\mu[2(n-1)-n\mu]+2n\kappa \mu,
\end{equation}
\begin{equation}\label{EQ63}
\kappa [2(n-1)-n\mu]=\mu(\kappa -1)[2(n-1)+\mu]+2n\kappa^2.
\end{equation}
Subtracting two last equations and replacing the result equation in \eqref{EQ61} we get the result. The converse statement is trivial and this complete the proof.
\end{proof}
From the above proposition and Theorem \ref{4.0}, we have the following corollary.
\begin{corollary}\label{1.1}
Every  three-dimentional $(\kappa,\mu)$-manifold is Ricci-generalized pseudosymmetric manifold if and only if either $M$ is a Sasakian manifold of constant curvature $+1$ or $\kappa=-\mu$ holds on $M$. 
\end{corollary}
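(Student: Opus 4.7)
The plan is to specialize Proposition~\ref{5.2} to dimension three and then invoke Theorem~\ref{4.0} to dispose of the degenerate branch that arises.

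First I would substitute $n=1$ into the characterizing identity \eqref{EQ100}. The factor $2(n-1)$ annihilates the third term, and what remains factors as $-\mu(\kappa+\mu)=0$. By Proposition~\ref{5.2}, in dimension three Ricci-generalized pseudosymmetry is therefore equivalent to the alternative $\mu=0$ or $\kappa=-\mu$.

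The branch $\kappa=-\mu$ is already one of the two conclusions in the corollary, so no further argument is required there. For the branch $\mu=0$, the manifold is an $N(\kappa)$-contact metric manifold. The Ricci-generalized pseudosymmetry identity $R\cdot R=L\,Q(S,R)$, specialized to the slot pattern $(Y,Z,W;\xi,X)$, is exactly $R(\xi,X)\cdot R=L(\xi\wedge_S X)\cdot R$, which is the very identity from which the proof of Proposition~\ref{5.2} departs. Theorem~\ref{4.0} then applies; since its alternative conclusion $E^{n+1}\times S^n(4)$ requires $n>1$, in dimension three only the Sasakian-of-constant-curvature-$+1$ case survives. The excluded case $L=0$ is semisymmetric and, as recalled just above Proposition~\ref{5.2}, yields the same Sasakian conclusion when $n=1$ (the product case again being excluded by $n=1$).

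For the converse, substituting $\kappa=-\mu$ and $n=1$ back into \eqref{EQ100} makes it vanish, so Proposition~\ref{5.2} produces Ricci-generalized pseudosymmetry directly. If instead $M$ is Sasakian of constant curvature $+1$, then $M$ is locally symmetric, hence semisymmetric, hence Ricci-generalized pseudosymmetric with $L=0$. The only real potential snag is matching the hypothesis of Theorem~\ref{4.0} in the $\mu=0$ branch; once one observes that the $Y=\xi$, inner-product-with-$\xi$ contraction already performed in the proof of Proposition~\ref{5.2} is precisely a restatement of that hypothesis, everything that remains is pure specialization.
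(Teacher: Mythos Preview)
Your proposal is correct and follows exactly the route the paper indicates: the paper's own ``proof'' of this corollary is the single sentence ``From the above proposition and Theorem~\ref{4.0}, we have the following corollary,'' and you have simply spelled out that derivation in detail---specializing \eqref{EQ100} to $n=1$ to obtain $-\mu(\kappa+\mu)=0$, keeping the branch $\kappa=-\mu$, and feeding the branch $\mu=0$ into Theorem~\ref{4.0}. One minor point worth making explicit is the ``non-flat'' hypothesis of Theorem~\ref{4.0}: in the $\mu=0$ branch, flatness would force $\kappa=0$ via \eqref{EQ11}, which is already absorbed by $\kappa=-\mu$, so the hypothesis is harmless.
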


\begin{proposition}\label{5.1}
Let $M^{2n+1}$ be a $(\kappa,\mu)$-contact metric Ricci-generalized pseudosymmetric  manifold. Then for any unit vector fields $X, Y\in\mathfrak{X}(M)$ orthogonal to $\xi$ and such that $g(X ,Y)=0$ we have:
\begin{equation}\label{EQ19} 
\begin{array}{ll}
\kappa g(X, R_{XY}Y)+\mu g(hX, R_{XY}Y)- [\kappa+\mu g(hX,X)][\kappa+\mu g(hY, Y)]+\mu^2 g^2(hX,Y)\\
=L\lbrace S(X, R_{XY}Y)-\kappa S(X, X)- \mu S(X, X)g(hY, Y)+ \mu S(X,Y)g(hX,Y)\rbrace .
\end{array}
\end{equation}  
\end{proposition}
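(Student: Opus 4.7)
The plan is to derive the identity as a direct specialization of the general Ricci-generalized pseudosymmetry equation \eqref{EQ58} obtained in the proof of Proposition~\ref{5.2}. That equation already encodes the full content of $(R(\xi,X).R)(Y,Z,W)=L((\xi\wedge_S X).R)(Y,Z,W)$ after contracting with $\xi$ and simplifying by the $(\kappa,\mu)$-contact metric identities \eqref{EQ11}--\eqref{EQ151}, so no further geometric input is needed; I only have to evaluate it on the right tuple of vectors.

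Concretely, I would substitute the outer vector and the three arguments of $R$ in \eqref{EQ58} by $(X,X,Y,Y)$, where $X,Y$ are the orthonormal vectors from the statement, both orthogonal to $\xi$. The hypothesis $\eta(X)=\eta(Y)=0$ annihilates every $\eta$-term in \eqref{EQ58}, and the hypotheses $g(X,X)=g(Y,Y)=1$, $g(X,Y)=0$ eliminate the remaining cross terms. The LHS of \eqref{EQ58} then collapses to
\[
\kappa\, g(X,R(X,Y)Y) + \mu\, g(hX,R(X,Y)Y) - \kappa^2 - \kappa\mu[g(hX,X)+g(hY,Y)] - \mu^2 g(hX,X)g(hY,Y) + \mu^2 g(hX,Y)^2,
\]
while the RHS reduces to $L\{S(X,R(X,Y)Y) - \kappa S(X,X) - \mu S(X,X)g(hY,Y) + \mu S(X,Y)g(hX,Y)\}$.

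The final step is the algebraic factorization
\[
\kappa^2 + \kappa\mu[g(hX,X)+g(hY,Y)] + \mu^2 g(hX,X)g(hY,Y) = [\kappa+\mu g(hX,X)][\kappa+\mu g(hY,Y)],
\]
whose right-hand side, by Theorem~\ref{5.0}(1), is exactly the product $K(X,\xi)\,K(Y,\xi)$ of the two $\xi$-sectional curvatures. Inserting this factorization back into the collapsed LHS produces precisely \eqref{EQ19}.

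The only real obstacle is bookkeeping: tracking which of the many $\kappa$-, $\kappa\mu$-, and $\mu^2$-contributions in the lengthy equation \eqref{EQ58} survive all four simultaneous substitutions, and then spotting the above factorization. No new cancellations or identities beyond routine expansion are required.
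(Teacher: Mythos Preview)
Your proof is correct and follows essentially the same route as the paper: both evaluate the pseudosymmetry identity $(R(\xi,X).R)(X,Y)Y=L((\xi\wedge_S X).R)(X,Y)Y$ and contract with $\xi$, using \eqref{EQ12} and \eqref{EQ14}. The only cosmetic difference is that the paper expands directly with the specialized triple $(X,Y,Y)$ to obtain \eqref{EQ21} and then contracts, whereas you reuse the already-contracted general identity \eqref{EQ58} from Proposition~\ref{5.2} and specialize afterward---the underlying computation and the final factorization are identical.
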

\begin{proof}
Since $M$ is Ricci-generalized pseudosymmetric manifold then  
\begin{equation}\label{EQ20}
(R(\xi, X). R)(X, Y)Y=L[((\xi \wedge_S X). R)(X, Y)Y]. 
\end{equation} 
By virtu of \eqref{EQ1}, \eqref{EQ2} and \eqref{EQ8} in \eqref{EQ20} one can easily get
\begin{equation}\label{EQ21} 
\begin{array}{ll}
R(\xi, X). R(X, Y)Y- R(R_{\xi X}X,Y)Y- R(X, R_{\xi X}Y)Y- R(X, Y)R_{\xi X}Y\\ 
=L\lbrace S(X, R(X,Y)Y)\xi - 2n\kappa \eta(R(X,Y)Y)X- S(X,X)R(\xi, Y)Y-\\
S(X,Y)R(X, \xi)Y- S(X,Y)R(X, Y)\xi\rbrace.
\end{array}
\end{equation}
Using \eqref{EQ12} and \eqref{EQ14} in \eqref{EQ21} and taking the inner product with $\xi$ we get the result.
\end{proof}   
\begin{proposition}\label{5.11}
Every Ricci-generalized pseudosymmetric Sasakian manifold with $L\neq \dfrac{1}{2(n-1)-n\mu}$ is of constant curvature 1.
\end{proposition}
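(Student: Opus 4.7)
The plan is to specialize the pointwise identity \eqref{EQ19} of Proposition \ref{5.1} to the Sasakian setting $h = 0$, $\kappa = 1$, and exploit the resulting factorization in the sectional curvature $K(X,Y)$.

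First I would substitute $\kappa = 1$ and $h = 0$ directly into \eqref{EQ19}. On the left-hand side every $h$-term dies and the product $[\kappa + \mu g(hX,X)][\kappa + \mu g(hY,Y)]$ collapses to $1$; taking $X, Y$ to be unit orthogonal vectors in $\xi^{\perp}$ as allowed by the hypothesis, the LHS becomes $K(X,Y) - 1$. On the right-hand side the summands $\mu S(X,X) g(hY,Y)$ and $\mu S(X,Y) g(hX,Y)$ drop out, and feeding \eqref{EQ15} with $\kappa = 1$, $h = 0$, and $\eta(X) = \eta(Y) = 0$ gives $S(X,X) = 2(n-1) - n\mu$ together with $S(X, R_{XY}Y) = [2(n-1)-n\mu] K(X,Y)$.

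Second, identity \eqref{EQ19} then reduces to the single scalar equation
\[
\bigl(K(X,Y) - 1\bigr)\bigl(1 - L[2(n-1) - n\mu]\bigr) = 0.
\]
Because the hypothesis $L \neq \tfrac{1}{2(n-1) - n\mu}$ forbids the second factor from vanishing, we conclude $K(X,Y) = 1$ for every orthonormal pair of vectors in $\xi^{\perp}$.

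Third, on a Sasakian manifold the identity \eqref{EQ10} gives $R(X, \xi)\xi = X - \eta(X)\xi$, so $K(X, \xi) = 1$ for every unit $X \perp \xi$. Specializing step two to $Y = \varphi X$ shows in particular that the $\varphi$-sectional curvature is the constant $1$ everywhere. This identifies $M$ as a Sasakian space form with $\varphi$-sectional curvature $c = 1$; inspection of the standard curvature tensor of a Sasakian space form shows that the correction terms carrying a factor $(c-1)/4$ vanish precisely when $c = 1$, yielding the curvature tensor of a space of constant sectional curvature $1$.

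The main obstacle I anticipate is not the algebraic collapse in the first two steps, which is bookkeeping, but the final step: upgrading the data $K(X,Y) = 1$ on planes inside $\xi^{\perp}$ and $K(X, \xi) = 1$ on planes through $\xi$ to the sectional curvature being $1$ on every $2$-plane of $T_pM$. This relies on the classical structure theorem for Sasakian space forms, which is where the proof exploits the fact that on a Sasakian manifold constant $\varphi$-sectional curvature $1$ forces genuine constant curvature $1$ rather than merely $\eta$-Einstein behaviour.
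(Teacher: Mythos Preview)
Your proposal is correct and follows essentially the same route as the paper: both substitute the Sasakian data $\kappa=1$, $h=0$ into the identity coming from $(R(\xi,X)\cdot R)(X,Y)Y = L\,((\xi\wedge_S X)\cdot R)(X,Y)Y$ (you work directly with the scalar form \eqref{EQ19}, the paper with its precursor \eqref{EQ21} before taking the inner product with $\xi$), and then use \eqref{EQ15} to factor and force $K(X,Y)=1$ for orthonormal $X,Y\perp\xi$. Your final paragraph, invoking the Sasakian space--form curvature tensor to upgrade constant $\varphi$-sectional curvature $1$ to genuine constant sectional curvature $1$, is in fact more careful than the paper, which simply stops at $K(X,Y)=1$.
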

\begin{proof}
Let $X$ and $Y$ be unit tangent vectors such that $\eta(X)=\eta(Y)=0$ and $g(X, Y)=0$. Since $M$ is Sasakian then $\kappa =1$ and $h=0$. Using \eqref{EQ10} and \eqref{EQ14} in equation \eqref{EQ21} and direct computation we get  
\begin{equation*}
\begin{array}{ll}
g(X, R(X,Y)Y)\xi- \eta(R(X,Y)Y) X-\xi=L\lbrace S((X, R(X,Y)Y)\xi- \\
2n\eta(R(X,Y)Y)X- S(X, X)\xi \rbrace. 
\end{array}
\end{equation*}  
Taking the inner product with $\xi$ and using \eqref{EQ15} gives $K(X,Y)=1$ and this complete the proof.
\end{proof}
Now we prove the main Theorem of this paper.
\begin{theorem}\label{5.6}
Let $M^{2n+1}$, $n > 1$ be a non-Sasakian Ricci-generalized pseudosymmetric $(\kappa,\mu)$-contact metric manifold. Then either
\begin{itemize} 
\item[1)]  $\kappa =0,~\mu=\dfrac{2n-2}{n},~L=\dfrac{1}{n+1}$ ~or
\item[2)] $\kappa =\dfrac{-2}{n},~\mu=2,~L=\dfrac{1}{n}$ 
\end{itemize} 
hold on $M$.
\end{theorem}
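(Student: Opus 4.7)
The setup is that non-Sasakianness forces $\kappa<1$, so $\lambda=\sqrt{1-\kappa}>0$ by \eqref{EQ141} and the two eigenspaces $D(\lambda),D(-\lambda)$ of $h$ each have dimension $n\ge 2$. Proposition~\ref{5.2} already supplies one scalar constraint on $\kappa,\mu$, namely \eqref{EQ100}. The plan is to extract two further independent constraints by substituting carefully chosen test vectors into the identity \eqref{EQ19} of Proposition~\ref{5.1}, and then to solve the resulting algebraic system for $\kappa,\mu,L$.

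For the first substitution, let $X,Y\in D(\lambda)$ be a $g$-orthonormal pair with $\eta(X)=\eta(Y)=0$, which is available because $n>1$. Then $hX=\lambda X$, $hY=\lambda Y$, $g(hX,Y)=0$, and Theorem~\ref{5.0} yields $K(X,Y)=2(1+\lambda)-\mu$, so that $g(X,R(X,Y)Y)=K(X,Y)$ and $g(hX,R(X,Y)Y)=\lambda K(X,Y)$. On the Ricci side, \eqref{EQ15} collapses to $S(X,Z)=S(X,X)\,g(X,Z)$ for every $Z$ (since $\eta(X)=0$ and $X\in D(\lambda)$), with $S(X,X)=2(n-1)(1+\lambda)+\mu(\lambda-n)$; hence $S(X,R(X,Y)Y)=S(X,X)K(X,Y)$ and $S(X,Y)=0$. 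Plugging all of this into \eqref{EQ19} and using $\kappa=1-\lambda^2$, both sides turn out to carry the common factor $(1+\lambda)(1+\lambda-\mu)$; cancelling it leaves
\[
\kappa+\mu\lambda \;=\; L\bigl[2(n-1)(1+\lambda)+\mu(\lambda-n)\bigr]. \qquad (\mathrm{E}_{+})
\]
The identical calculation with $X,Y\in D(-\lambda)$ gives, after cancelling $(1-\lambda)(1-\lambda-\mu)$,
\[
\kappa-\mu\lambda \;=\; L\bigl[2(n-1)(1-\lambda)-\mu(n+\lambda)\bigr]. \qquad (\mathrm{E}_{-})
\]

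Taking half the sum of $(\mathrm{E}_{+})$ and $(\mathrm{E}_{-})$ gives $\kappa=L[2(n-1)-n\mu]$, and taking their difference divided by $2\lambda$ gives $\mu=L[2(n-1)+\mu]$; eliminating $L$ between these two yields the polynomial relation $\kappa[2(n-1)+\mu]=\mu[2(n-1)-n\mu]$. Adding this to \eqref{EQ100} collapses spectacularly to $2(n-1)\kappa(2-\mu)=0$, so since $n>1$ either $\kappa=0$ or $\mu=2$. Substituting each branch back into \eqref{EQ100} determines the remaining parameter, and $L$ is then read off from $L=\mu/[2(n-1)+\mu]$: the branch $\kappa=0$ forces $\mu=2(n-1)/n$ (the spurious root $\mu=0$ being excluded by $L\neq0$) and $L=1/(n+1)$, while $\mu=2$ forces $\kappa=-2/n$ and $L=1/n$, yielding exactly the two cases stated in the theorem.

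The main technical hazard is the degenerate regime in which the factors $(1\pm\lambda-\mu)$ cancelled above happen to vanish, or in which $\lambda=1$ so that $(\mathrm{E}_{-})$ becomes vacuous. The case $\lambda=1$ is actually essential rather than pathological, because it coincides with the alternative $\kappa=0$: here $(\mathrm{E}_{-})$ carries no information, but $(\mathrm{E}_{+})$ together with \eqref{EQ100} still uniquely pins down $\mu=2(n-1)/n$ and $L=1/(n+1)$, recovering case~(1). In the remaining borderline sub-cases $\mu=1\pm\lambda$, one replaces the orthonormal test pair by a mixed pair $X\in D(\lambda),\ Y\in D(-\lambda)$ in \eqref{EQ19}, invoking Theorem~\ref{5.0} for the corresponding sectional curvature $-(\kappa+\mu)[g(X,\varphi Y)]^{2}$; a direct verification then confirms that no new solutions outside the two asserted alternatives arise.
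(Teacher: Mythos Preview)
Your main-line argument is the same strategy as the paper's: plug $h$-eigenvectors into \eqref{EQ19}, factor, and combine with \eqref{EQ100}. Your equations $(\mathrm{E}_{+})$ and $(\mathrm{E}_{-})$ are precisely the paper's conditions ``$\kappa=A$'' and ``$\kappa=B$'', and your generic case (both hold) is exactly the paper's system~4. The reduction to $2(n-1)\kappa(2-\mu)=0$ is correct and considerably cleaner than the paper's treatment of that system.

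The gap is in the degenerate cases. You cancel $(1+\lambda)(1+\lambda-\mu)$ and $(1-\lambda)(1-\lambda-\mu)$ to obtain $(\mathrm{E}_{\pm})$, so whenever $\mu=1+\lambda$ or $\mu=1-\lambda$ one of these equations is simply unavailable, not merely ``vacuous.'' You acknowledge this and promise that a mixed pair $X\in D(\lambda),\,Y\in D(-\lambda)$ in \eqref{EQ19} disposes of these sub-cases, but you do not carry it out. In fact the mixed-pair substitution again factors, yielding \emph{either} $(\mathrm{E}_{+})$ (respectively $(\mathrm{E}_{-})$) \emph{or} a sectional-curvature constraint $K(X,Y)=\kappa\mp\lambda\mu$ that depends on $g(X,\varphi Y)$; so one still faces a case split. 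The paper handles this by enumerating all $24$ combinations of which factor vanishes across the four test configurations. Several of these (e.g.\ $\mu=1+\lambda$ together with $\kappa=B$, the paper's systems 22--24) are not one-line checks: one must compare the root of \eqref{EQ100} under $\mu=1+\lambda$, namely $(2n-1)\lambda^{2}+(2-3n)\lambda+(n-3)=0$, against an independent expression for $\lambda$ coming from $(\mathrm{E}_{-})$ and the mixed-pair constraint, and show they are incompatible. Your final paragraph asserts this outcome without supplying it. To complete the proof you need either to run that verification explicitly for $\mu=1+\lambda$ and $\mu=1-\lambda$, or to give an a~priori argument that at least one of $(\mathrm{E}_{+}),(\mathrm{E}_{-})$ must hold regardless (which would let you bypass most of the case analysis).
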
   
\begin{proof}
Applying the relation \eqref{EQ19} for $hX=\lambda X,~hY=\lambda Y$ and using \eqref{EQ15} we get
\begin{equation}\label{EQ25}
\begin{array}{ll} 
[\kappa+\lambda \mu- L(2(n-1)-n\mu+\lambda[2(n-1)+\mu])(K(X,Y)-(\kappa+\lambda \mu))=0.
\end{array}
\end{equation} 
Then 
\begin{equation}\label{EQ26}
i)~~K(X, Y)=\kappa + \lambda \mu ~~ \mbox{ or } ~~ ii)~~\kappa= -\lambda \mu+L(2(n-1)-n\mu+\lambda[2(n-1)+\mu]).
\end{equation}
Comparing parts $(i)$ of equations \eqref{EQ18} and \eqref{EQ26} gives
\begin{equation}\label{EQ27}
\mu= 1+\lambda.
\end{equation}
Suppose now that $X, Y \in D(-\lambda)$. Then from equation \eqref{EQ19} and \eqref{EQ15} we have
\begin{equation}\label{EQ28}
[\kappa-\lambda \mu- L(2(n-1)-n\mu-\lambda[2(n-1)+\mu])(K(X,Y)-(\kappa-\lambda \mu))=0,
\end{equation}  
and then
\begin{equation}\label{EQ29}
i)~~K(X, Y)=\kappa - \lambda \mu ~~ \mbox{ or } ~~ ii)~~\kappa= \lambda \mu+L(2(n-1)-n\mu-\lambda[2(n-1)+\mu]).
\end{equation}
Comparing the equations \eqref{EQ18}(iii) and \eqref{EQ29}(i) we have  
\begin{equation}\label{EQ31} 
i)~~\mu= 1-\lambda ~~ \mbox{ or } ~~ ii)~~\lambda=1.
\end{equation}
In the case $X \in D(\lambda)$ and $ Y\in D(-\lambda)$  we get
\begin{equation}\label{EQ34}  
i)~K(X, Y)= \kappa- \lambda \mu ~~ \mbox{ or } ~~  \kappa= -\lambda \mu+L(2(n-1)-n\mu+\lambda[2(n-1)+\mu]),
\end{equation}
while if $X \in D(-\lambda)$ and $Y\in D(\lambda)$ we similarly prove that
\begin{equation}\label{EQ35} 
i)~K(X, Y)= \kappa+ \lambda \mu ~~ \mbox{ or } ~~  \kappa=\lambda \mu+L(2(n-1)-n\mu-\lambda[2(n-1)+\mu]).
\end{equation} 
Let 
$$-\lambda \mu+L(2(n-1)-n\mu+\lambda[2(n-1)+\mu])=A,$$ 
$$\lambda \mu+L(2(n-1)-n\mu-\lambda[2(n-1)+\mu])=B.$$
By the combination now of the equation \eqref{EQ26}(ii), \eqref{EQ27}, \eqref{EQ29}(ii), \eqref{EQ31}, \eqref{EQ34}  and \eqref{EQ35} we establish the following systems among the unknowns $\kappa, \lambda, \mu$ and $L$.
\begin{itemize}
\item[1)] $\lbrace \kappa=A,~ \kappa=B,~ K(X, Y)=\kappa-\lambda\mu,~ K(X, Y)=\kappa+\lambda\mu \rbrace $ 
\item[2)] $\lbrace \kappa=A,~ \kappa=B,~ K(X, Y)=\kappa-\lambda\mu \rbrace $
\item[3)] $\lbrace \kappa=A,~ \kappa=B, ~ K(X, Y)=\kappa+\lambda\mu \rbrace $
\item[4)] $\lbrace \kappa=A,~ \kappa=B \rbrace $
\item[5)] $\lbrace \mu=1+\lambda,~ \mu=1-\lambda,~K(X, Y)=\kappa-\lambda\mu,~ K(X, Y)=\kappa+\lambda\mu \rbrace $
\item[6)] $\lbrace \mu=1+\lambda,~ \mu=1-\lambda,~\kappa=B,~ K(X, Y)=\kappa-\lambda\mu \rbrace $
\item[7)] $\lbrace \mu=1+\lambda,~ \mu=1-\lambda,~\kappa=A,~ K(X, Y)=\kappa+\lambda\mu \rbrace $
\item[8)] $\lbrace  \mu=1+\lambda,~ \mu=1-\lambda ,~\kappa=A,~ \kappa=B \rbrace $
\item[9)] $\lbrace \kappa=A,~\mu=1-\lambda,~K(X, Y)=\kappa-\lambda\mu,~ K(X, Y)=\kappa+\lambda\mu \rbrace $
\item[10)] $\lbrace \kappa=A,~\mu=1-\lambda,~K(X, Y)=\kappa-\lambda\mu,~ \kappa=B \rbrace $
\item[11)] $\lbrace \kappa=A,~\mu=1-\lambda, ~ K(X, Y)=\kappa+\lambda\mu \rbrace $
\item[12)] $\lbrace \kappa=A,~\mu=1-\lambda,~ \kappa=B \rbrace $ 
\item[13)] $\lbrace \kappa=A,~\lambda=1,~K(X, Y)=\kappa-\lambda\mu,~ K(X, Y)=\kappa+\lambda\mu  \rbrace $ 
\item[14)] $\lbrace \kappa=A,~\lambda=1,~K(X, Y)=\kappa-\lambda\mu,~ \kappa=B \rbrace $
\item[15)] $\lbrace  \kappa=A,~\lambda=1,~K(X, Y)=\kappa+\lambda\mu \rbrace $
\item[16)] $\lbrace \kappa=A,~\lambda=1,~ \kappa=B \rbrace $
\item[17)] $\lbrace \mu=1+\lambda,~\lambda=1,~K(X, Y)=\kappa-\lambda\mu,~ K(X, Y)=\kappa+\lambda\mu \rbrace $
\item[18)] $\lbrace \mu=1+\lambda,~\lambda=1,~K(X, Y)=\kappa-\lambda\mu,~\kappa=B \rbrace $
\item[19)] $\lbrace \mu=1+\lambda,~\lambda=1,~\kappa=A,~ K(X, Y)=\kappa+\lambda\mu \rbrace $ 
\item[20)] $\lbrace \mu=1+\lambda,~\lambda=1,~\kappa=A,~\kappa=B \rbrace $
\item[21)] $\lbrace \mu=1+\lambda,~\kappa=B,~K(X, Y)=\kappa-\lambda\mu,~ K(X, Y)=\kappa+\lambda\mu\rbrace $
\item[22)] $\lbrace \mu=1+\lambda,~\kappa=B,~K(X, Y)=\kappa-\lambda\mu\rbrace $ 
\item[23)] $\lbrace \mu=1+\lambda,~\kappa=B,~\kappa=A,~K(X, Y)=\kappa+\lambda\mu \rbrace $
\item[24)] $\lbrace \mu=1+\lambda,~\kappa=B,~\kappa=A \rbrace $.
\end{itemize} 
We note that in all systems that two equations $K(X, Y)=\kappa-\lambda\mu$ and $K(X, Y)=\kappa+\lambda\mu$ hold, one can easily get $\lambda\mu=0$ and since $M$ is non-Sasakian $(\kappa,\mu)$-manifold then $\kappa\neq 1(\Rightarrow \lambda=\sqrt{1-\kappa}\neq 0)$ and hence $\mu=0$.

Also in system with two equations $\kappa=A$ and $\kappa=B$, it follows that
\begin{equation}\label{EQ36} 
-\mu+L[2(n-1)+\mu]=0.   
\end{equation}  
In such systems, if $\mu=0$, then $L=0$ and this is a contradiction with our assumption. Hence $\mu\neq 0$ and from equation \eqref{EQ36} we have $L\neq 1$ and 
\begin{equation}\label{EQ37}  
\mu=\dfrac{2(n-1)L}{1-L}. 
\end{equation} 
In first system, according to what discuss in above, we have $\mu=0$ and $\mu\neq 0$ an this is a contradiction. 

In system 2, for any $X \in D(\lambda),~K(X, \varphi X)=\kappa-\lambda\mu$.  Copmparing this equation by part (ii) of  \eqref{EQ18} implies
\begin{equation}\label{EQ38} 
2\kappa-\lambda\mu+\mu=0.
\end{equation}
Equation \eqref{EQ38} by means of $\kappa=A$ and \eqref{EQ37} gives
\begin{equation}\label{EQ39} 
L=\dfrac{3-\lambda}{2(1+n)}. 
\end{equation} 
Moreover, by virtue of \eqref{EQ141} ,\eqref{EQ37} and \eqref{EQ39} in $\kappa=A$ we get
\begin{equation}\label{EQ391} 
(\lambda-1)(\lambda^2+(n+1)\lambda+(5n-4))=0. 
\end{equation} 
But $(\lambda^2+(n+1)\lambda+(5n-4))\neq0$, because otherwise 
 $$\lambda=\dfrac{1}{2}(-(n+1)+\sqrt{n^2-18n+17}),$$ 
and since $\lambda> 0$ we obtain $-20n+16>0$ and this is impossible. Then $\lambda=1$ and hence from \eqref{EQ141}, \eqref{EQ39} and \eqref{EQ37} we get $\kappa=0$, $L=\dfrac{1}{1+n}$ and $\mu=\dfrac{2n-2}{n}$, respectively. 

In system 3, from $K(X, \varphi X)=\kappa+\lambda\mu$ and part (ii) of equation \eqref{EQ18} we have 
\begin{equation}\label{EQ42}  
2\kappa+\mu(1+\lambda)=0.   
\end{equation}
Replacing \eqref{EQ141} in \eqref{EQ42} gives us
\begin{equation}\label{EQ381} 
\mu=2(\lambda -1).
\end{equation}
On the other hand, applying $\kappa=A$ and \eqref{EQ37} in \eqref{EQ42} we get $L=\dfrac{3+\lambda}{2(n+1)}$ and then from \eqref{EQ37}, $\mu=\dfrac{2(n-1)(\lambda +3)}{2n-1-\lambda}$.
Comparing last equation by \eqref{EQ381} yields 
\begin{equation}\label{EQ90} 
\lambda= \dfrac{1}{2}(n+1\pm \sqrt{n^2-18n+17}).   
\end{equation}
Now using \eqref{EQ141} and \eqref{EQ381}  in \eqref{EQ100} one can get $\lambda= \dfrac{n}{2n-1}$,  that it is a contradiction.

From \eqref{EQ37} and $\kappa=A$ in system 4, it follows that  
\begin{equation}\label{EQ72} 
\kappa= \mu(1-L(1+n)).  
\end{equation}
Applying \eqref{EQ72} to \eqref{EQ100} gives
 \begin{equation}\label{EQ73} 
\mu=\dfrac{-2(n-1)(2-L(1+n))}{(1-3n)-L(1+n)(1-2n)}.  
\end{equation}
Comparing \eqref{EQ37} and \eqref{EQ73} yields $(n^2+n)L^2-(2n+1)L+1=0$.
This quadratic equation has two roots $L_1=\dfrac{1}{1+n}$ and $L_2=\dfrac{1}{n}$. If $L=L_1$ (resp $L=L_2$) equations \eqref{EQ37} and \eqref{EQ72} imply $\mu=\dfrac{2n-2}{n}$ and $\kappa=0$ (resp $\mu=2$ and $\kappa=\dfrac{-2}{n}$ ), respectively. 

In systems 5, 6, 7 and 8 we get easily $\lambda=0$ and then from equation \eqref{EQ141}, $\kappa=1$ which is a contradiction since we suppose $M$ be non-Sasakian.

In systems 9 and 13, 
we have $\mu=0,~\lambda=1$ and then $\kappa=0$. Hence from equation $\kappa=A$ we get $L=0$, contradicting our assumption.
 
In system 10 (resp. system 11), taking $\mu=1-\lambda$ in equation \eqref{EQ38} (resp. \eqref{EQ42}) and using \eqref{EQ141} we have $\kappa=0$ and then $\lambda=1$,~$\mu=0$ and $L=0$ which is a contradiction with $L\neq 0$.

In system 12 (resp. system 24), 
taking $\mu=1-\lambda$ (resp. $\mu=1+\lambda$ ) in \eqref{EQ37} implies 
\begin{equation}\label{EQ44} 
(1-\lambda)(1-L)=2(n-1)L ~~~(resp.~ (1+\lambda)(1-L)=2(n-1)L).
\end{equation} 
Applying \eqref{EQ37} and \eqref{EQ141} to $\kappa=A$ yields
\begin{equation}\label{EQ45}  
(1-\lambda^2)(1-L)=2(n-1)L(1-L(1+n)).
\end{equation}
Now using \eqref{EQ44} in \eqref{EQ45} gives us 
\begin{equation}\label{EQ46}
\lambda=-(1+n)L~~~(resp.~ \lambda=(1+n)L).
\end{equation}
By virtue of $\mu=1-\lambda$ (resp. $\mu=1+\lambda$) and \eqref{EQ46}  in \eqref{EQ36} we obtain
\begin{equation}\label{EQ02}
\lambda=\dfrac{1}{2}(n-2+\sqrt{n^2+8})~~~(resp.~ \lambda=\dfrac{1}{2}(2-n+\sqrt{n^2+8})).
\end{equation}

Now replacing $\kappa=1-\lambda^2$ and $\mu=1-\lambda$ (resp. $\mu=1+\lambda$) in  \eqref{EQ100} we have
\begin{equation*}
(\lambda-1)((1-2n)\lambda^2 +(2-3n)\lambda+(3-n))=0 ~(resp.~ (1+\lambda)((2n-1)\lambda^2 +(2-3n)\lambda+(n-3))=0).
\end{equation*}
If $\lambda=1$ then we have $\kappa=0,~\mu=0$ and hence $L=0$, contradicting our assumption ( resp. and since $\lambda> 0$),  then
\begin{equation*}  
(1-2n)\lambda^2 +(2-3n)\lambda+(3-n)=0~~(resp. ~(2n-1)\lambda^2 +(2-3n)\lambda+(n-3)=0),
\end{equation*}
and $\lambda=\dfrac{1}{2-4n}(3n-2\pm \sqrt{n^2+16n-8})$ (resp. $\lambda=\dfrac{1}{4n-2}(3n-2\pm\sqrt{n^2+16n-8})$). But this is a contradiction with \eqref{EQ02}. 

In system 14 we have $\lambda=1$, $\kappa=0$ and for any $X \in D(\lambda)$ and $Y \in D(-\lambda)$, $K(X, Y)=-\mu$. Comparing last equation with part (ii) of equation \eqref{EQ18} we get  for all $X, Y \in D(\lambda),~g(X, Y)=1$ and this is a contradiction. 
 
In system 15, from $\lambda=1$ we have $\kappa=0$ and $K(X, \varphi X)=\mu$. However, $K(X, \varphi X)= -(\kappa+ \mu)[g(X, \varphi Y)]^2=-\mu$, then $\mu=0$ and from $\kappa=A$ we get $L=0$, contradicting our assumption.

In system 16, substituting $\lambda =1$ in $\kappa=A$ and $\kappa=B$ and summing two result equations, gives us
\begin{equation}\label{EQ65} 
\mu (1-L(n+1))=0,
\end{equation} 
From equation \eqref{EQ65}, since $\mu\neq 0$, we get $L=\dfrac{1}{n+1}$. However, replacing $\kappa=0$ in \eqref{EQ100} we obtain $\mu=\dfrac{2n-2}{n}$.

Systems 17, 18, 19 and 20 can not occur, because in these systems $\lambda=1$ yields $\kappa=0$ and $\mu=1+\lambda=2$.
On the other hand, substituting $\mu=2$ in \eqref{EQ100} gives us $\kappa=\dfrac{-2}{n}$ and this is a contradiction.





In system 21, since $\mu=0$ then $\lambda=-1$, which isn't acceptable because $\lambda >0$. 
  
In system 22 (resp 23), replacing $\mu=1+\lambda$ in \eqref{EQ38} (resp \eqref{EQ42}) and using \eqref{EQ141} we get $\kappa=0$ and then $\lambda=1, \mu=2$ (resp $\lambda=3$ and then $\mu=4, \kappa=-8$) which these solutions don't satisfy the equation \eqref{EQ100} and this complete the proof.
\end{proof}
\begin{example}\cite{30} 
Let $M$ be a three dimensional manifold admitting the Lie algebra structure 
\begin{equation}\label{EQ66} 
 [e_2, e_3] = c_1e_1,~~~ [e_3, e_1] = c_2e_2,~~ [e_1, e_2] = c_3e_3,
\end{equation}
and $\eta$ be the dual $1$-form to the vector field $e_1$. From \eqref{EQ66} we get   
\begin{equation*}\label{EQ 4.3}
 \left\{ 
\begin{array}{rl}
&d\eta(e_2, e_3) = -d\eta(e_3, e_2) =\dfrac{c_1}{2}\neq 0 \\ 
&d\eta(e_i, e_j) = 0\quad  \text{for }~~ (i, j)\neq (2, 3), (3, 2).
\end{array} \right.
\end{equation*}
One can easily check that $\eta$ and $e_1$ are contact form and the characteristic vector field respectively. A Riemannian metric $g$
defining by $g(e_i, e_j) = \delta_{ij}$ is an associated metric if we have $\phi^2 = -I +\eta \otimes e_1$. Then for $c_1 = 2$, $(\varphi, e_1, \eta, g)$ will be a contact metric structure.
Recall that the unique Riemannian connection $\nabla$ of $g$ is given by
\begin{equation*}
\begin{array}{ll} 
2g(\nabla_X Y, Z) &= Xg(Y, Z) + Y g(Z, X) - Zg(X, Y )\\
& - g(X, [Y, Z])- g(Y, [X,Z]) + g(Z, [X, Y ]).
\end{array}
\end{equation*}
Direct calculation implies
\begin{equation*}
\begin{array}{ll}
\nabla_{e_1}e_1 = 0,~~~ \nabla_{e_2}e_2 = 0,~~~ \nabla_{e_3}e_3 = 0,\\
\nabla_{e_1}e_2 =\dfrac{1}{2}(c_2 + c_3 - 2)e_3,~~~ \nabla_{e_2}e_1 =\dfrac{1}{2}(c_2 - c_3 - 2)e_3,\\
\nabla_{e_1}e_3 = -\dfrac{1}{2}(c_2 + c_3 - 2)e_2,~~~ \nabla_{e_3}e_1 =\dfrac{1}{2}(2 + c_2 - c_3)e_2,\\
R(e_2, e_1)e_1 = [1 - \dfrac{(c_3 - c_2)^2}{4}]e_2 + [2 - c_2 - c_3]he_2,\\
R(e_3, e_1)e_1 = [1 -\dfrac{(c_3 - c_2)^2}{4}]e_3 + [2 - c_2 - c_3]he_3,\\
R(e_2, e_3)e_1 = 0.
\end{array}
\end{equation*} 
On the other hand from \eqref{EQ0165} we have 
\begin{equation*}
\nabla_{e_2}e_1 = -\varphi e_2 - \varphi he_2.
\end{equation*}
Comparing two relations of $\nabla_{e_2}e_1$ we conclude that 
\begin{equation*}
 he_2 =\dfrac{c_3 - c_2}{2}e_2,~~~~~~~and ~hence~~~he_3 =-\dfrac{c_3 - c_2}{2}e_3.
\end{equation*}

Hence $e_i$ are eigenvectors of $h$ and $(0, \lambda, -\lambda)$ are their corresponding eigenvalues  where $\lambda =\dfrac{c_3-c_2}{2}e_2$. 

Putting $\kappa= 1- \dfrac{(c_3-c_2)^2}{4}$ and $\mu= 2-c_2 -c_3$ we conclude that $e_1$ belongs to the $(\kappa, \mu)$-nullity distribution, for any $c_2, c_3$.
Let $c_2 = c_3 = 1$, we get $\kappa = 1$ and $M$ is a sasakian manifold with constant curvature $+1$. Hence from corollary \ref{1.1}, $M^3(\varphi, e_1, \eta, g)$ is a Ricci-generalized pseudosymmetric manifold. 
\end{example}



\end{document}